\documentclass[12pt]{article}
\usepackage[a4paper, total={6in, 9in}]{geometry}
\usepackage{cite}
\usepackage{amsthm}
\usepackage{amsmath,amssymb,amsfonts}
\usepackage{bbm}

\usepackage{algorithmic}
\usepackage{graphicx}
\usepackage{algorithm,algorithmic}
\usepackage{hyperref}
\hypersetup{hidelinks=true}
\usepackage{textcomp}

\usepackage{graphicx} 
\usepackage{mathtools}
\usepackage{amsfonts}

\usepackage{comment}
\bibliographystyle{abbrv}
\usepackage{caption}
\usepackage{subcaption}

\usepackage{mathtools}

\DeclarePairedDelimiterX\braket[2]{\langle}{\rangle}{#1\,\delimsize\vert\,\mathopen{}#2}

\newtheorem{theorem}{Theorem}[section]
\newtheorem{remark}[theorem]{Remark}

\newtheorem{lemma}[theorem]{Lemma}

\usepackage{xcolor}
\usepackage{soul}

\begin{document}

\title{\LARGE \bf
An Approach to Control Design for Two-level Quantum Ensemble Systems*
}

\author{Ruikang Liang$^{1}$ and Gong Cheng$^{2}$
\thanks{*RL's work has been partly supported by the ANR-DFG project ``CoRoMo'' ANR-22-CE92-0077-01. This project has received financial support from the
CNRS through the MITI interdisciplinary programs. GC's work is supported in part by the Fundamental Research Funds for the Central Universities and by NSFC Grant No.~12401588.}
\thanks{$^{1}$R.~Liang is with Sorbonne Universit\'e, Universit\'e 
Paris Cit\'e, CNRS, INRIA, Laboratoire Jacques-Louis Lions, LJLL, F-75005 Paris, France
    {\tt\small  ruikang.liang@sorbonne-universite.fr}}%
\thanks{$^{2}$G.~Cheng is with the School of Mathematical Sciences and the Key Laboratory of Intelligent Computing and Applications (Ministry of Education), Tongji University, Shanghai 200092, CHINA
    {\tt\small gongch@tongji.edu.cn}}%
}

\maketitle
\thispagestyle{empty}
\pagestyle{empty}

\begin{abstract}

Quantum ensemble systems arise in a variety of applications, including NMR spectroscopy and robust quantum control. While their theoretical properties have been extensively studied, relatively little attention has been given to the explicit construction of control inputs. In this paper, we address this gap by presenting a fully implementable control strategy for a one-parameter family of driftless two-level quantum systems. The proposed method is supported by rigorous analysis that guarantees accurate approximation of target distributions on $\mathbf{SU(2)}$. Convergence properties are established analytically, and numerical simulations are provided to demonstrate the effectiveness of the approach.

\end{abstract}

\section{INTRODUCTION}
\label{sec:intro}

In this paper, we address the ensemble control problem for a continuum of driftless two-level quantum systems on $\mathrm{SU}(2)$, steered by two control inputs. The system dynamics is given by
\begin{equation}
\label{system:intro}
    i\dot{X}(t)=\omega
    \begin{pmatrix}
        0 & u(t)-iv(t) \\
        u(t) + iv(t) & 0
    \end{pmatrix}X(t),
\end{equation}
where $\omega$ is a parameter ranging over an interval $[a,b]\subset\mathbb{R}^{*}_{+}$. The objective is to design a single (uniform) control law that steers the entire family of systems from a common initial state to a collection of target states on $\mathrm{SU(2)}$, where the target depends continuously on the parameter $\omega$.

Controlling large ensembles of two-level quantum systems with uniform control inputs is a central challenge in quantum control, due to inherent parameter variations across the ensemble. Prior research has established that such inhomogeneous two-level systems are, in principle, controllable, often through Lie-algebraic criteria and functional analytic techniques.
In particular, controllability of system~\eqref{system:intro} was first demonstrated in \cite{li2006bloch}. Related models involving a family of systems with dispersion in frequency and driven by a bounded complex-valued control $u(t)$ were analyzed in \cite{BeauchardCoronRouchon, li2009ensemble}:
\[
    i\dot{X}(t)=\begin{pmatrix}\omega & u(t) \\
    \bar{u}(t) & -\omega\end{pmatrix}X(t).
\]
Extensions to the case of real-valued bounded controls were later presented in \cite{ROBIN2022414}.

Despite these advances in theoretical controllability, the explicit construction of control inputs for such systems remains largely underexplored. Initial efforts have focused on population transfer via adiabatic motion. For instance, in \cite{liang:10886609, liang:hal-04819595, ROBIN2022414}, it was shown that uniform population transfer between eigenstates can be achieved for a continuum of $n$-level systems using a scalar control that combines rotating wave and adiabatic approximations. Further developments can be found in \cite{AugierSIAM, AugierMCRF, LeghtasSarletteRouchon}. However, a general and practical design method remains largely elusive.

In contrast to these approaches, which are often theoretical or restricted to specific transfer tasks, the present work provides a constructive procedure to synthesize control fields that steer the entire ensemble toward a desired target distribution - beyond population transfer between eigenstates. By recasting the system in a suitable time-dependent frame, we show that rotational terms with frequencies depending on $\omega$ naturally arise in the transformed dynamics. Leveraging this structure, we construct control inputs based on Fourier transform techniques, enabling the approximation of arbitrary smooth target distributions as functions of $\omega$. The effectiveness of the proposed method is demonstrated through numerical simulations, which confirm that the analytically derived controls accurately realize the intended quantum-state transformations across the ensemble.

The remainder of the paper is organized as follows. Section~\ref{sect:result} presents the explicit construction of the control law and states the main theoretical result. Section~\ref{sect:numerical} provides numerical experiments that illustrate the effectiveness of the proposed control strategy in two representative scenarios.

\section{MAIN RESULT}
\label{sect:result}

As a first step toward our main result, we reformulate system~\eqref{system:intro} in terms of the Pauli matrices as follows: 
\begin{equation}
\label{system:main}
    i\dot{X}(\omega,t) = \omega\bigl( u(t)\sigma_{x} + v(t)\sigma_{y}\bigr)X(\omega,t),
\end{equation}
with initial state $ X(\omega,0) = \mathbb{I}$, where $u(\cdot)$ and $v(\cdot)$ and $\omega$ are as defined in \eqref{system:intro}. The matrices $\sigma_{x},\sigma_y$, and $\sigma_{z}$ denote the standard Pauli matrices:
\[
    \sigma_{x}=\begin{pmatrix}
        0 & 1\\
        1 & 0
    \end{pmatrix},\ %
    \sigma_{y}=\begin{pmatrix}
        0 & -i\\
        i & 0
    \end{pmatrix},\ %
    \sigma_{z}=\begin{pmatrix}
        1 & 0\\
        0 & -1
    \end{pmatrix}.
\]

\subsection{Fourier transform}
To facilitate the control design of the driftless system presented in (1), this section introduces a set of auxiliary functions and revisits key properties of their Fourier transforms. These properties are fundamental to the control strategy developed in the subsequent sections. For a compact subset $K$ of $\mathbb{R}$, we introduce the following notation
\begin{equation}
\label{eq:notation-compactly-supported}
    \mathcal{C}_{c}^{\infty}(K):=\bigl\{f\in\mathcal{C}_{c}^{\infty}(\mathbb{R})\mid\operatorname{supp} f\subset K\bigr\},
\end{equation}
which denotes the set of smooth functions supported on the compact set $K$. Take $f\in\mathcal{C}_{c}^{\infty}([v_0,v_1])$ with $0<v_0<v_1$, and let $g$ be an even function associated with $f$ defined by
\begin{equation}
\label{eq:def_g}
    \omega g(2\omega) := f(\omega), \quad \forall\,\omega\geq 0,
\end{equation}
which is a smooth function compactly supported on $[-2v_1,-2v_0]\cup[2v_0,2v_1]$. Now consider the unitary Fourier transform of $g$:
\begin{equation}
\begin{aligned}
\label{eq:def_hat_g}
    \hat{g}(t)&=\frac{1}{\sqrt{2\pi}}\int_{-\infty}^{\infty}g(\omega)e^{-it\omega}\text{d}\omega\\
    &=\sqrt{\frac{2}{\pi}}\int_{2v_0}^{2v_1}g(\omega)\cos(t\omega)\text{d}\omega,
\end{aligned}
\end{equation}
with $t\in\mathbb{R}$.
Since $g(\cdot)$ is even, its Fourier transform $\hat{g}$ has only real part and is also even. By the inverse transform we have that for all $\omega\in\mathbb{R}$,
\begin{equation}
\label{eq:fourier_inverse}
    \begin{aligned}
        \frac{1}{\sqrt{2\pi}}\int_{-\infty}^{\infty} \hat{g}(\tau)\cos(\omega \tau)\text{d}\tau &= g(\omega), \\
        \frac{1}{\sqrt{2\pi}}\int_{-\infty}^{\infty} \hat{g}(\tau)\sin(\omega \tau)\text{d}\tau &= 0.
    \end{aligned}
\end{equation}

To aid the computations involved in the subsequent control design, we present the following properties of $g$ and its Fourier transform.

\begin{lemma}
\label{lemma:schwartz}
    For all $n\in\mathbb{N}^{*}$, there exists $C_n>0$ such that for all $t\in\mathbb{R}^{*}$, we have $|\hat{g}(t)|\leq C_{n}|t|^{-n}$. Thus, $\hat{g}\in L^1(\mathbb{R})$.
\end{lemma}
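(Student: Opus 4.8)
The plan is to exploit the fact that $g$ is smooth and compactly supported, so that $\hat g$ decays faster than any polynomial — the standard Schwartz-type estimate. Concretely, I would start from the formula for $\hat g$ in \eqref{eq:def_hat_g} and integrate by parts repeatedly, each integration by parts trading a factor of $t$ for a derivative of $g$. Since $g\in\mathcal{C}_c^\infty(\mathbb{R})$ with support in $[-2v_1,-2v_0]\cup[2v_0,2v_1]$, all boundary terms vanish (the derivatives of $g$ are zero outside a compact set, in particular at the endpoints of integration if one extends the integral to all of $\mathbb{R}$), leaving a clean bound.

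More precisely, I would write $\hat g(t)=\frac{1}{\sqrt{2\pi}}\int_{\mathbb{R}} g(\omega)e^{-it\omega}\,\mathrm{d}\omega$ and use the identity $e^{-it\omega}=\frac{1}{(-it)^n}\frac{\mathrm{d}^n}{\mathrm{d}\omega^n}e^{-it\omega}$ for $t\neq 0$. Substituting and integrating by parts $n$ times, the compact support of $g$ kills all boundary contributions, and one obtains
\[
    \hat g(t)=\frac{1}{\sqrt{2\pi}\,(it)^{n}}\int_{\mathbb{R}} g^{(n)}(\omega)\,e^{-it\omega}\,\mathrm{d}\omega.
\]
Taking absolute values and bounding $|e^{-it\omega}|\le 1$ gives
\[
    |\hat g(t)|\le \frac{1}{\sqrt{2\pi}\,|t|^{n}}\int_{\mathbb{R}}\bigl|g^{(n)}(\omega)\bigr|\,\mathrm{d}\omega
    =: C_n\,|t|^{-n},
\]
where $C_n:=\frac{1}{\sqrt{2\pi}}\,\lVert g^{(n)}\rVert_{L^1(\mathbb{R})}$ is finite because $g^{(n)}$ is continuous with compact support. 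This establishes the claimed decay for every $n\in\mathbb{N}^*$.

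For the integrability conclusion $\hat g\in L^1(\mathbb{R})$, I would split the integral of $|\hat g|$ over $\{|t|\le 1\}$ and $\{|t|>1\}$. On the bounded region, $\hat g$ is continuous (indeed bounded by $\frac{1}{\sqrt{2\pi}}\lVert g\rVert_{L^1}$ from the case $n=0$, or simply by continuity of the Fourier transform of an $L^1$ function), so its integral over $\{|t|\le1\}$ is finite. On the tail $\{|t|>1\}$, I apply the estimate with $n=2$, giving $|\hat g(t)|\le C_2\,t^{-2}$, which is integrable at infinity. Summing the two contributions yields $\hat g\in L^1(\mathbb{R})$.

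I do not anticipate a genuine obstacle here: the argument is the textbook proof that the Fourier transform of a Schwartz (or compactly supported smooth) function is again Schwartz, hence rapidly decreasing. The only points requiring a little care are verifying that the boundary terms in the integration by parts truly vanish — which is immediate from $g\in\mathcal{C}_c^\infty$ — and ensuring the constants $C_n$ are finite, which follows since each $g^{(n)}$ is continuous on a compact set and therefore has finite $L^1$ norm.
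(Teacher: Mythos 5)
Your proof is correct and matches the paper's in substance: the paper simply cites the standard fact that the Fourier transform maps the Schwartz space (hence $\mathcal{C}_c^{\infty}$) into itself, while you carry out the integration-by-parts argument that proves exactly that fact, so your version is just a self-contained unpacking of the same idea. The estimates, the vanishing of boundary terms, and the splitting of $\int|\hat g|$ into $\{|t|\le 1\}$ and $\{|t|>1\}$ are all sound.
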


\begin{proof}
This is obvious, since the unitary Fourier transformation is an isomorphism on the Schwartz space
\begin{equation}
    \label{eq:schwartz}
    \mathcal{S}=\{f\in C^{\infty}\mid \sup_{x\in\mathbb{R}}(1+|x|)^N|\partial^{\alpha} f(x)| < \infty, \forall\, N, \alpha\},
\end{equation}
and since $g\in C_{c}^{\infty}(\mathbb{R})\subset \mathcal{S}$, we must have $\hat{g}\in\mathcal{S}\subset L^1(\mathbb{R})$.
\end{proof}

\begin{lemma}
\label{lemma:integral_2}
    For every $n\in\mathbb{N}^{*}$, there exists $C_{n}>0$ such that for all $\epsilon_1>0$ and $\omega\in[2v_0,2v_1]$, we have
    \begin{equation}
    \label{eq:integral_2}
        \biggl| g(\omega)-\int_{-\frac{1}{\epsilon_1}}^{\frac{1}{\epsilon_1}}\frac{1}{\sqrt{2\pi}}\hat{g}(\tau)\cos(w\tau)\text{d}\tau \biggr| < C_{n} \epsilon_1^{-n}.
    \end{equation}
\end{lemma}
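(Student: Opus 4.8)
The plan is to recognize that the bracketed quantity in \eqref{eq:integral_2} is nothing but the tail of the inverse Fourier representation of $g$. Indeed, the inverse transform identity \eqref{eq:fourier_inverse} states that the full integral $\frac{1}{\sqrt{2\pi}}\int_{-\infty}^{\infty}\hat g(\tau)\cos(\omega\tau)\,\mathrm{d}\tau$ equals $g(\omega)$ exactly, for every $\omega\in\mathbb{R}$. Subtracting the truncated integral over $[-\tfrac{1}{\epsilon_1},\tfrac{1}{\epsilon_1}]$ therefore leaves only the contribution from $|\tau|>\tfrac{1}{\epsilon_1}$, so that
\[
    g(\omega)-\frac{1}{\sqrt{2\pi}}\int_{-\frac{1}{\epsilon_1}}^{\frac{1}{\epsilon_1}}\hat g(\tau)\cos(\omega\tau)\,\mathrm{d}\tau
    =\frac{1}{\sqrt{2\pi}}\int_{|\tau|>\frac{1}{\epsilon_1}}\hat g(\tau)\cos(\omega\tau)\,\mathrm{d}\tau.
\]
This reduces the whole statement to a tail estimate for a rapidly decaying function.

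First I would pass the absolute value inside the integral and use $|\cos(\omega\tau)|\le 1$; this discards all $\omega$-dependence in one stroke and is what yields uniformity over $\omega\in[2v_0,2v_1]$, giving the bound $\frac{1}{\sqrt{2\pi}}\int_{|\tau|>1/\epsilon_1}|\hat g(\tau)|\,\mathrm{d}\tau$. Next I would invoke Lemma~\ref{lemma:schwartz} with decay order one higher than the target power: taking $m:=n+1$ there is a constant $C_{n+1}$ with $|\hat g(\tau)|\le C_{n+1}|\tau|^{-(n+1)}$, and by evenness a one-line integration gives
\[
    \int_{|\tau|>\frac{1}{\epsilon_1}}|\tau|^{-(n+1)}\,\mathrm{d}\tau
    =2\int_{\frac{1}{\epsilon_1}}^{\infty}\tau^{-(n+1)}\,\mathrm{d}\tau
    =\frac{2}{n}\,\epsilon_1^{\,n},
\]
so the error is at most $\tfrac{2C_{n+1}}{n\sqrt{2\pi}}\,\epsilon_1^{\,n}$, a constant depending only on $n$ times a power of $\epsilon_1$.

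There is no serious obstacle: the entire content is the identification of the truncation error with a tail integral, together with the Schwartz decay of $\hat g$. The only points needing care are choosing the decay order $n+1$ so the tail integral both converges and scales with the correct power, and checking that the resulting constant is independent of $\omega$ and $\epsilon_1$, which is automatic once $|\cos|\le 1$ has been used. I would also note that the natural bound produced here is a \emph{positive} power $\epsilon_1^{\,n}$, which vanishes as $\epsilon_1\to 0$, i.e.\ as the truncation window $\tfrac{1}{\epsilon_1}$ widens; this is the estimate the control construction will ultimately exploit.
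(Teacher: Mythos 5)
Your proposal is correct and follows essentially the same route as the paper's own proof: identify the truncation error with the tail of the inverse Fourier integral, bound $|\cos(\omega\tau)|\le 1$, and invoke Lemma~\ref{lemma:schwartz} with decay order $n+1$ to get the constant $2C_{n+1}/(n\sqrt{2\pi})$ and the power $\epsilon_1^{\,n}$. Note that both your argument and the paper's actually establish the bound $C_n\epsilon_1^{\,n}$ rather than the $C_n\epsilon_1^{-n}$ written in the statement, which appears to be a typo (the positive power is what is used later in Lemma~\ref{lemma:one_step}).
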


\begin{proof}
Take $n\in\mathbb{N}^{*}$. By Lemma~\ref{lemma:schwartz}, there exists $C'_{n+1}$ such that for all $t\neq 0$, $|\hat{g}(t)|<C'_{n+1}|t|^{-(n+1)}$. Then we can obtain that, for $\epsilon_1>0$,
\[
    \begin{aligned}
    \int_{\frac{1}{\epsilon_1}}^{\infty}|\hat{g}(\tau)|\text{d}\tau&\leq\int_{\frac{1}{\epsilon_1}}^{\infty}\frac{C'_{n+1}}{\tau^{n+1}}\text{d}\tau=\frac{C'_{n+1}}{n}\epsilon_1^{n}
    \end{aligned}
\]
By equation~\eqref{eq:fourier_inverse}, and since $\hat{g}(t)$ is even, for $\epsilon_1>0$,
\[
\begin{aligned}
     \biggl| g(\omega) - \int_{-\frac{1}{\epsilon_1}}^{\frac{1}{\epsilon_1}}\frac{1}{\sqrt{2\pi}}\hat{g}(\tau)\cos(\omega \tau)\text{d}\tau \biggr|
        &= 2\biggl|\int_{\frac{1}{\epsilon_1}}^{\infty}\frac{1}{\sqrt{2\pi}}\hat{g}(\tau)\cos(\omega \tau)\text{d}\tau \biggr| \\
        &< 2\int_{\frac{1}{\epsilon_1}}^{\infty}\frac{1}{\sqrt{2\pi}}|\hat{g}(\tau)|\text{d}\tau\leq \frac{2C'_{n+1}}{n\sqrt{2\pi}}\epsilon_1^{n}.
\end{aligned}
\]
We can thus conclude the proof by taking $C_{n}=2C'_{n+1}/(n\sqrt{2\pi})$.
\end{proof}
\begin{lemma}
\label{lemma:integral_3}
    For all $\epsilon_1>0$ and $\omega\in[2v_0,2v_1]$ we have 
    \[
        \int_{-\frac{1}{\epsilon_1}}^{\frac{1}{\epsilon_1}}\frac{1}{\sqrt{2\pi}}\hat{g}(\tau)\sin(\omega\tau)\text{d}\tau=0.
    \]
\end{lemma}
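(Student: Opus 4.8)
The plan is to exploit the parity of the integrand, so the argument is essentially a one-line symmetry observation made precise. Recall from the discussion immediately following \eqref{eq:def_hat_g} that, since $g$ is even, its Fourier transform $\hat{g}$ is real-valued and even. First I would note that for each fixed $\omega\in[2v_0,2v_1]$ the map $\tau\mapsto\sin(\omega\tau)$ is odd in $\tau$. Consequently the integrand $\hat{g}(\tau)\sin(\omega\tau)$, being the product of the even function $\hat{g}$ and the odd function $\sin(\omega\,\cdot)$, is an odd function of $\tau$.

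Next I would observe that the domain of integration $[-1/\epsilon_1,1/\epsilon_1]$ is symmetric about the origin, so the integral of any integrable odd function over it vanishes. Concretely, I would split the integral at $0$ and apply the change of variables $\tau\mapsto-\tau$ on the negative half; using $\hat{g}(-\tau)=\hat{g}(\tau)$ and $\sin(-\omega\tau)=-\sin(\omega\tau)$, the contribution from $[-1/\epsilon_1,0]$ exactly cancels that from $[0,1/\epsilon_1]$. Integrability on this bounded interval is immediate, since $\hat{g}$ is continuous (indeed $\hat{g}\in\mathcal{S}$ by Lemma~\ref{lemma:schwartz}), so no convergence subtlety arises and the manipulation is fully justified.

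There is no real obstacle here: the result is a direct consequence of symmetry, and unlike Lemma~\ref{lemma:integral_2} it is an exact identity rather than an estimate, precisely because the odd sine factor kills the integral regardless of $\epsilon_1$. The only point I would make sure to state cleanly is the evenness of $\hat{g}$, which is already guaranteed by the evenness of $g$ together with the cosine representation of the transform in \eqref{eq:def_hat_g}.
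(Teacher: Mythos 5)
Your proof is correct and follows exactly the paper's argument: the integrand is odd (even $\hat{g}$ times odd sine) and the interval is symmetric about the origin, so the integral vanishes. The extra detail you give about splitting at $0$ and substituting $\tau\mapsto-\tau$ merely makes explicit what the paper leaves implicit.
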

\begin{proof}
    The proof follows immediately by observing that $\hat{g}(t)$ is even, $\sin(\omega t)$ is odd, and the interval $[-1/\epsilon_1,1/\epsilon_1]$ is symmetric about the origin.
\end{proof}

\subsection{Control design}

In this section, we address the problem of control design of the driftless system described in \eqref{system:main}. To that end, consider a smooth function $f$, supported on a finite interval  $[v_0,v_1]\subset \mathbb{R}$, as introduced in the previous section. We then define an even function $\hat{g}$, associated with $f$, as in \eqref{eq:def_hat_g}, and fix constants $\epsilon_1>0$ and $N>0$. With these definitions in place, we then consider the following control inputs $u(\cdot)$ and $v(\cdot)$, defined on the interval $[0,(4N+2)/\epsilon_1]$:
\smallskip
\begin{itemize}
\setlength\itemsep{1em}
    \item $u(t)=-1$ and $v(t)=0$, if $t\in[0,1/\epsilon_1]$;
    
    \item $u(t)=1$ and $v(t)=\tfrac{\hat{g}(t-(4m+2)/\epsilon_1)}{2N\sqrt{2\pi}}$, if $t\in[\tfrac{4m+1}{\epsilon_1},\tfrac{4m+3}{\epsilon_1}]$ with $m\in\{0,\dots,N-1\}$;
    
    \item $u(t)=-1$ and $v(t)=\tfrac{\hat{g}(t-(4m+4)/\epsilon_1)}{2N\sqrt{2\pi}}$ if $t\in[\tfrac{4m+3}{\epsilon_1},\tfrac{4m+5}{\epsilon_1}]$ with $m\in\{0,\dots,N-1\}$;
    
    \item $u(t)=1$ and $v(t)=0$ if $t\in[\tfrac{4N+1}{\epsilon_1},\tfrac{4N+2}{\epsilon_1}]$.
\end{itemize}

With the control inputs defined above, we now turn to analyzing the resulting system behavior. The goal is to show that these controls steer the system from the identity to a specific ensemble of unitary matrices.

\begin{theorem}[Main result]
\label{theorem:main}
    Let $f\in C_{c}^{\infty}([v_0,v_1])$ with $0<v_{0}<v_1$, and fix $\epsilon_1>0$, $N\in\mathbb{N}^{*}$, and $n\in\mathbb{N}^{*}$. Consider the control inputs $u(\cdot)$ and $v(\cdot)$ defined as above. Then, the corresponding solution of system~\eqref{system:main} satisfies
    \[
        X\Bigl(\omega,\frac{4N+2}{\epsilon_1}\Bigr) = \exp\bigl(-if(\omega)\sigma_{y}\bigr) + \mathcal{O}(\epsilon_1^{n}+N^{-1}),
    \]
    where $\mathcal{O}(\cdot)$ is uniform for all $\omega\in[v_0,v_1]$.
\end{theorem}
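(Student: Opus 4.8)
The plan is to pass to an interaction frame that absorbs the piecewise-constant $\sigma_x$-drive and converts the $\sigma_y$-control into an oscillatory, $\omega$-dependent generator, and then to average over the $2N$ pulses. First I would set $U(t):=\int_0^t u(s)\,ds$ and $R(t):=\exp(-i\omega U(t)\sigma_x)$, which solves $i\dot R=\omega u\sigma_x R$ with $R(0)=\mathbb{I}$, and introduce $Y:=R^{-1}X$. Using the conjugation identity $e^{i\theta\sigma_x}\sigma_y e^{-i\theta\sigma_x}=\cos(2\theta)\sigma_y-\sin(2\theta)\sigma_z$, a direct computation gives
\[
i\dot Y(\omega, t) = \omega v(t)\bigl[\cos(2\omega U(t))\sigma_y - \sin(2\omega U(t))\sigma_z\bigr]Y(\omega,t), \quad Y(\omega,0) = \mathbb{I}.
\]
The decisive bookkeeping is that $U$ vanishes at the midpoint of every active pulse (the points $(4m+2)/\epsilon_1$ and $(4m+4)/\epsilon_1$ about which $\hat g$ is centered) and that $U(T)=0$ at the final time $T=(4N+2)/\epsilon_1$ — the leading and trailing constant pulses exist precisely to cancel the net $\sigma_x$-rotation — so that $R(T)=\mathbb{I}$ and $X(\omega,T)=Y(\omega,T)$. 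It therefore suffices to analyze $Y$.

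Next I would treat a single active pulse. On the $m$-th pulse of either type, substituting $\tau=t-(\text{center})$ gives $U=\pm\tau$ and $v=\hat g(\tau)/(2N\sqrt{2\pi})$, so the time-integral of the generator is
\[
\int \omega v\bigl[\cos(2\omega U)\sigma_y - \sin(2\omega U)\sigma_z\bigr]\,d\tau = \frac{\omega}{2N}\Bigl(\sigma_y\,\tfrac{1}{\sqrt{2\pi}}\!\!\int_{-1/\epsilon_1}^{1/\epsilon_1}\!\!\hat g(\tau)\cos(2\omega\tau)\,d\tau \mp \sigma_z\,\tfrac{1}{\sqrt{2\pi}}\!\!\int_{-1/\epsilon_1}^{1/\epsilon_1}\!\!\hat g(\tau)\sin(2\omega\tau)\,d\tau\Bigr).
\]
Since $\omega\in[v_0,v_1]$ gives $2\omega\in[2v_0,2v_1]$, Lemma~\ref{lemma:integral_3} makes the $\sigma_z$-integral vanish exactly, while Lemma~\ref{lemma:integral_2} together with the defining relation $\omega g(2\omega)=f(\omega)$ reduces the $\sigma_y$-coefficient to $\frac{f(\omega)}{2N}+\mathcal{O}(\epsilon_1^{n}/N)$. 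Because the generator has $L^1$-in-time norm $\mathcal{O}(1/N)$ on each pulse, the Dyson (Duhamel) expansion yields the single-pulse propagator as $\exp\!\bigl(-i\tfrac{f(\omega)}{2N}\sigma_y\bigr)+\mathcal{O}(\epsilon_1^{n}/N+N^{-2})$.

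Finally I would compose the $2N$ pulses. The key structural fact is that every pulse propagator equals, to leading order, $\exp(-i\tfrac{f(\omega)}{2N}\sigma_y)$, and these factors \emph{commute}, so their product telescopes to $\exp(-if(\omega)\sigma_y)$ — which, via $R(T)=\mathbb{I}$, is exactly the claimed target. Writing each propagator as $\exp(-i\tfrac{f}{2N}\sigma_y)(\mathbb{I}+E_k)$ with $\|E_k\|=\mathcal{O}(\epsilon_1^{n}/N+N^{-2})$, a standard telescoping estimate bounds the deviation of the full product from $\exp(-if(\omega)\sigma_y)$ by $\prod_k(1+\|E_k\|)-1\le\exp\bigl(\sum_k\|E_k\|\bigr)-1=\mathcal{O}(\epsilon_1^{n}+N^{-1})$, since $\sum_{k=1}^{2N}\|E_k\|=\mathcal{O}(\epsilon_1^{n}+N^{-1})$. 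All constants depend only on $\|f\|_\infty$, $\|\hat g\|_{L^1}$, $v_1$, and the Lemma constants, none of which involve $\omega$, so the estimate is uniform on $[v_0,v_1]$, completing the argument.

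I expect the main obstacle to be this last composition step. A naive Magnus or Dyson bound applied to the full evolution fails, because the total $L^1$-in-time norm of the interaction-frame generator is $\mathcal{O}(1)$ rather than $o(1)$, so the second-order cross terms between distinct pulses are a priori $\mathcal{O}(1)$. The resolution is the alignment of every leading-order pulse generator with $\sigma_y$: the potentially dangerous cross commutators reduce to $[\sigma_y,\sigma_y]=0$ at leading order and survive only at order $\epsilon_1^{n}/N^{2}$ per pair, so that even after summing over the $\mathcal{O}(N^{2})$ pairs they remain $\mathcal{O}(\epsilon_1^{n})$. Capturing this cancellation cleanly through the commuting-factor telescoping above, rather than through a brute-force high-order expansion, is the crux of the proof.
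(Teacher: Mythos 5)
Your proposal is correct and follows essentially the same route as the paper's proof: an interaction frame absorbing the $\sigma_x$-drive (the paper's Lemma~\ref{lem:one_step}), the Fourier inversion Lemmas~\ref{lemma:integral_2}--\ref{lemma:integral_3} to identify the averaged generator $\tfrac{f(\omega)}{2N}\sigma_y$ per pulse, a first-order Magnus/Dyson estimate with error $\mathcal{O}(\epsilon_1^n/N+N^{-2})$ per pulse (the paper's Lemma~\ref{lemma:one_step}), and a telescoping composition of the $2N$ pulses. The only cosmetic differences are that you perform the frame change globally via $U(t)=\int_0^t u$ rather than pulse by pulse, and compose $2N$ single pulses instead of $N$ pairs.
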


The above theorem establishes that system~\eqref{system:main} can be steered from identity to an ensemble of exponential of the Pauli matrix $\sigma_y$. In the following remarks, we explain why this result is, in fact, sufficient to achieve full controllability over the ensemble. Specifically, we show how the stated result implies that, by leveraging symmetry and utilizing the Euler angle decomposition, the system can be steered from the identity to any desired distribution on $\mathrm{SU}(2)$.

\begin{remark}
    By symmetry, interchanging the control inputs $u(\cdot)$ and $v(\cdot)$ in the above construction yields $\exp(-if(\omega)\sigma_{x})$ instead of $\exp(-if(\omega)\sigma_{y})$. It will be further illustrated through numerical examples presented later in the paper.
\end{remark}

\begin{remark}[Euler angle decomposition]
According to the Euler angle decomposition (see, for example, \cite{Nielsen_Chuang_2010}), any element $U$ in $\mathrm{SU}(2)$ can be expressed in the form
\[
    U=\exp(-i\alpha\sigma_{x})\exp(-i\beta\sigma_y)\exp(-i\gamma\sigma_{x}),
\]
for some angles $\alpha, \beta, \gamma \in [0, 2\pi)$. Therefore, it suffices to approximate arbitrary smooth ensembles on $\mathrm{SU}(2)$ of the form $\exp(-if(\omega)\sigma_{x})$ and $\exp(-if(\omega)\sigma_{y})$ using our control strategy.
\end{remark}

Before presenting the proof of Theorem~\ref{theorem:main}, we introduce the following auxiliary system. For fixed parameters $\epsilon_1, \epsilon_2 > 0$, consider the system defined on the closed time interval $[-\tfrac{1}{\epsilon_1}, \tfrac{1}{\epsilon_1}]$:
\begin{equation}
\label{eq:auxiliary-system}
    i\dot{\hat{X}}(\omega,t)=\hat{H}_{\epsilon_1,\epsilon_2}(\omega,t)\hat{X}(\omega,t), \ \hat{X}\Bigl(\omega,-\frac{1}{\epsilon_1}\Bigr)=\mathbb{I},
\end{equation}
where the controlled Hamiltonian $\hat{H}_{\epsilon_1,\epsilon_2}$ is given by
\[
    \hat{H}_{\epsilon_1,\epsilon_2}(\omega,t) := \omega v_{\epsilon_1,\epsilon_2}(t)\bigl(\sin(2\omega t)\sigma_z+\cos(2\omega t)\sigma_{y}\bigr)
\]
and the control input $v_{\epsilon_1, \epsilon_2}$ is defined by
\begin{equation}
\label{eq:def_control}
    v_{\epsilon_1,\epsilon_2}:\left[-\frac{1}{\epsilon_1},\frac{1}{\epsilon_1}\right]\ni t\mapsto \frac{\epsilon_2}{\sqrt{2\pi}}\hat{g}(t),
\end{equation}
where $\hat{g}$ is the even function defined in \eqref{eq:def_hat_g} corresponding to $f\in\mathcal{C}_{c}^{\infty}([v_0,v_1])$. We then have the following lemma on the approximation of the ensemble in \eqref{eq:auxiliary-system}.

\begin{lemma}
\label{lemma:one_step}
    For every $n\in\mathbb{N}^{*}$, there exists a constant $C_{n}$ such that for all $\omega\in[-v_{1},-v_{0}]\cup[v_0,v_1]$, the solution $\hat{X}(\omega,\cdot)$ to \eqref{eq:auxiliary-system} associated with the control $v_{\epsilon_1,\epsilon_2}(\cdot)$ satisfies that
    \[
        \|\hat{X}(\omega,\epsilon_1^{-1}) - \exp(-i\epsilon_2\omega g(2\omega)\sigma_{y})\| < C_{n}(\epsilon_1^{n}\epsilon_2+\epsilon_2^2).
    \]
\end{lemma}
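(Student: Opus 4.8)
The plan is to expand the propagator of the auxiliary system \eqref{eq:auxiliary-system} as a Dyson (Picard) series in the small control amplitude $\epsilon_2$ and to compare it, term by term, with the Taylor expansion of the target exponential $\exp(-i\epsilon_2\omega g(2\omega)\sigma_y)$. Since $\hat H_{\epsilon_1,\epsilon_2}$ is a real combination of the Hermitian matrices $\sigma_z,\sigma_y$, the solution $\hat X(\omega,\cdot)$ is unitary. Writing \eqref{eq:auxiliary-system} in integral (Volterra) form and iterating once gives
\[
    \hat X(\omega,\epsilon_1^{-1}) = \mathbb I + \Omega_1 + R, \qquad \Omega_1 := -i\int_{-1/\epsilon_1}^{1/\epsilon_1}\hat H_{\epsilon_1,\epsilon_2}(\omega,t)\,\mathrm dt,
\]
where $R$ collects all the higher-order iterated integrals. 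The idea is that $\Omega_1$ produces the desired $\sigma_y$ generator up to the Fourier-truncation error $\mathcal O(\epsilon_1^n\epsilon_2)$, while $R$ is quadratically small in the amplitude.

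First I would compute $\Omega_1$. Inserting $v_{\epsilon_1,\epsilon_2}=\tfrac{\epsilon_2}{\sqrt{2\pi}}\hat g$ and separating the $\sigma_z$ and $\sigma_y$ components,
\[
    \Omega_1 = -i\epsilon_2\omega\Bigl[\Bigl(\tfrac{1}{\sqrt{2\pi}}\!\int_{-1/\epsilon_1}^{1/\epsilon_1}\!\hat g(t)\sin(2\omega t)\,\mathrm dt\Bigr)\sigma_z + \Bigl(\tfrac{1}{\sqrt{2\pi}}\!\int_{-1/\epsilon_1}^{1/\epsilon_1}\!\hat g(t)\cos(2\omega t)\,\mathrm dt\Bigr)\sigma_y\Bigr].
\]
The $\sigma_z$ integral vanishes identically by Lemma~\ref{lemma:integral_3} applied with frequency $2\omega$, and the $\sigma_y$ integral is within $C_n\epsilon_1^n$ of $g(2\omega)$ by Lemma~\ref{lemma:integral_2} applied with $2\omega$ (the parity of $g$ and $\cos$ covering the case $\omega<0$). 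Since $|\omega|\le v_1$, this yields $\Omega_1 = -i\epsilon_2\omega g(2\omega)\sigma_y + \mathcal O(\epsilon_1^n\epsilon_2)$, uniformly in $\omega$.

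Next I would bound the remainder $R$. The key observation is that $\|\hat H_{\epsilon_1,\epsilon_2}(\omega,t)\| = |\omega|\,|v_{\epsilon_1,\epsilon_2}(t)|$, because $\sin(2\omega t)\sigma_z+\cos(2\omega t)\sigma_y$ squares to $\mathbb I$ (the Pauli matrices $\sigma_z,\sigma_y$ anticommute and square to $\mathbb I$) and hence has operator norm $1$. Consequently the total action
\[
    A := \int_{-1/\epsilon_1}^{1/\epsilon_1}\|\hat H_{\epsilon_1,\epsilon_2}(\omega,t)\|\,\mathrm dt \le \frac{|\omega|\epsilon_2}{\sqrt{2\pi}}\,\|\hat g\|_{L^1(\mathbb R)}
\]
is bounded by a constant times $\epsilon_2$, \emph{uniformly in $\epsilon_1$}. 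This is precisely where the Schwartz decay of $\hat g$ from Lemma~\ref{lemma:schwartz} enters: the time horizon $2/\epsilon_1$ blows up as $\epsilon_1\to0$, yet $\hat g\in L^1(\mathbb R)$ keeps the action finite and $\epsilon_1$-independent. The standard tail estimate for the iterated-integral series then gives $\|R\|\le e^{A}-1-A\le \tfrac12 A^2 e^{A}=\mathcal O(\epsilon_2^2)$.

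Finally I would assemble the comparison. Expanding the target as $\exp(-i\epsilon_2\omega g(2\omega)\sigma_y) = \mathbb I -i\epsilon_2\omega g(2\omega)\sigma_y + \mathcal O(\epsilon_2^2)$ and subtracting, the zeroth- and first-order terms cancel against $\mathbb I+\Omega_1$ up to the $\mathcal O(\epsilon_1^n\epsilon_2)$ discrepancy in $\Omega_1$, while all remaining contributions are $\mathcal O(\epsilon_2^2)$; the triangle inequality then produces the claimed bound $C_n(\epsilon_1^n\epsilon_2+\epsilon_2^2)$. For $\epsilon_2\ge 1$ the estimate is trivial, since both matrices are unitary and differ by at most $2$ in operator norm, so a single constant $C_n$ can be chosen uniformly. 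The only step demanding genuine care is the uniform-in-$\epsilon_1$ control of the higher-order remainder $R$; once Lemmas~\ref{lemma:schwartz}--\ref{lemma:integral_3} are invoked, the rest is bookkeeping.
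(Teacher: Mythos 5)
Your proof is correct, but it takes a genuinely different route from the paper. You expand the propagator as a Dyson (time-ordered) series, identify the first-order term $\Omega_1=-iI(\omega,\epsilon_1^{-1})$ with the target generator up to the Fourier-truncation error of Lemmas~\ref{lemma:integral_2}--\ref{lemma:integral_3}, and bound the tail by $e^{A}-1-A=\mathcal O(\epsilon_2^2)$, comparing against the Taylor expansion of the target exponential. The paper instead performs the interaction-frame change of variables $\hat X=\exp(-iI(\omega,t))\tilde X$, invokes an external result to write the effective Hamiltonian $\tilde H$ as a series of iterated commutators $\operatorname{ad}^k_{I}(\hat H)$ whose integral is $\mathcal O(\epsilon_2^2)$, applies Gr\"onwall to get $\tilde X\approx\mathbb I$, and then compares the two \emph{exponentials} $\exp(-iI)$ and $\exp(-i\epsilon_2\omega g(2\omega)\sigma_y)$. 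Both arguments pivot on the same two facts --- the $\epsilon_1$-uniform $L^1$ bound on $\hat g$ (so the total action $A=\mathcal O(\epsilon_2)$ despite the diverging time horizon) and the truncated-Fourier identities --- and both are first-order averaging estimates. Your version is more elementary and self-contained, avoiding the citation of \cite[Proposition~5]{liang:hal-04819595} and working with linearizations rather than exponentials; the paper's version keeps a unitary approximant at every stage. Two points in your favor worth noting: you correctly retain the factor $\epsilon_2$ in the first-order discrepancy $\mathcal O(\epsilon_1^n\epsilon_2)$ (the paper's displayed intermediate bound $C''_n\epsilon_1^n$ on the difference of exponentials appears to drop it, even though it is needed to reach the stated conclusion), and you explicitly dispose of the regime $\epsilon_2\geq 1$ by unitarity, which makes the constants uniform.
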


\begin{proof}
Fix $\epsilon_1,\epsilon_2>0$. Let us introduce the following change of variables
\begin{equation}
\label{eq:change_of_variables}
    \hat{X}(\omega,t)=\exp(-iI(\omega,t))\tilde{X}(\omega,t),
\end{equation}
where $I(\omega,t)$ is defined as
\begin{equation}
\label{eq:def_I}
    \bigl([-v_1,-v_0]\cup[v_0,v_1]\bigr) \times [-\frac{1}{\epsilon_1},\frac{1}{\epsilon_1}]\ni(\omega,t)
    \mapsto I(\omega,t)=\int_{-1/\epsilon_1}^{t}\hat{H}_{\epsilon_1,\epsilon_2}(\omega,\tau)\text{d}\tau.
\end{equation}
For $\omega\in[-v_1,-v_0]\cup[v_0,v_1]$, by \cite[Proposition~5]{liang:hal-04819595}, and since $\dot{I}(\omega,t)=\hat{H}_{\epsilon_1,\epsilon_2}(\omega,t)$, $\tilde{X}(\omega,\cdot)$ is the solution of 
\[
    i\dot{\tilde{X}}(\omega,t)=\tilde{H}_{\epsilon_1,\epsilon_2}(\omega,t)\tilde{X}(\omega,t),\ %
    \tilde{X}\Bigl(\omega,-\frac{1}{\epsilon_1}\Bigr) = \mathbb{I},
\]
where $\hat{H}_{\epsilon_1,\epsilon_2}(t)$ is given by
\[
    \tilde{H}_{\epsilon_1,\epsilon_2}(\omega,t)
    =\sum_{k=1}^{\infty}\frac{(i)^{k}k}{(k+1)!}\operatorname{ad}^{k}_{I(\omega,t)}\big(\hat{H}_{\epsilon_1,\epsilon_2}(\omega,t)\big).
\]
Notice that 
\[ 
    \|\tilde{H}_{\epsilon_1,\epsilon_2}\omega,t)\| \leq \sum_{k=1}^{\infty}\frac{2^{k}k}{(k+1)!} \|I(\omega,t)\|^{k} \|\hat{H}_{\epsilon_1,\epsilon_2}(\omega,t)\|.
\]
By \eqref{eq:def_control}, \eqref{eq:def_I} and Lemma~\ref{lemma:schwartz}, there exists $C>0$ such that, for all $\omega\in[-v_1,-v_0]\cup[v_0,v_1]$, we have
\[
\begin{aligned}
    \int_{-\frac{1}{\epsilon_1}}^{\frac{1}{\epsilon_1}} \|\hat{H}_{\epsilon_1,\epsilon_2}(\omega,\tau)\|\text{d}\tau &\leq \int_{-\frac{1}{\epsilon_1}}^{\frac{1}{\epsilon_1}} \epsilon_2 \frac{|\omega|}{\sqrt{2\pi}}|\hat{g}(\tau)|\text{d}\tau< C|\omega|\epsilon_2,
\end{aligned}
\]
and for all $t\in[-\tfrac{1}{\epsilon_1}, \tfrac{1}{\epsilon_1}]$,
\[
    \|I(\omega,t)\| \leq \int_{-\frac{1}{\epsilon_1}}^{t}\|\hat{H}_{\epsilon_1,\epsilon_2}(\omega,\tau)\|\text{d}\tau < C|\omega|\epsilon_2.
\]
Therefore, there exists $M>0$ such that for all $t\in[-\tfrac{1}{\epsilon_1}, \tfrac{1}{\epsilon_1}]$ and $\omega\in[-v_1,-v_0]\cup[v_0,v_1]$,
\begin{equation*}
\begin{aligned}
    \int_{-\frac{1}{\epsilon_1}}^{t}\left\|\tilde{H}_{\epsilon_1,\epsilon_2}(\omega,\tau)\right\|\text{d}\tau
    &\leq\sum_{k=1}^{\infty}\frac{2^{k}k}{(k+1)!}C^{k}|\omega|^{k}\epsilon_2^{k}\int_{-\frac{1}{\epsilon_1}}^{t}\|\hat{H}_{\epsilon_1,\epsilon_2}(\tau)\|\text{d}\tau \\
    &\leq\sum_{k=1}^{\infty}\frac{2^k k}{(k+1)!}C^{k+1}|\omega|^{k+1}\epsilon_2^{k+1}<M\epsilon_2^2.
\end{aligned}
\end{equation*}

Here $M$ is uniform for all $\epsilon_1,\epsilon_2>0$. So by Gr{\"o}nwall's inequality, there exists $M'>0$ such that for all $\epsilon_1,\epsilon_2>0$ and $\omega\in[-v_1,-v_0]\cup[v_0,v_1]$, the solution $\tilde{X}_{\epsilon_1,\epsilon_2}(\cdot)$ satisfies
\[
    \|\tilde{X}_{\epsilon_1,\epsilon_2}(\omega,\epsilon_1^{-1}) - \mathbb{I}\|<M'\epsilon_2^2.
\]
Apply the change of variables in \eqref{eq:change_of_variables} again, we deduce that
\begin{equation}
\label{eq:estimation_1}
    \|\hat{X}_{\epsilon_1,\epsilon_2}(\omega,\epsilon_1^{-1}) - \exp\bigl(-iI(\omega,\frac{1}{\epsilon_1})\bigr)\| \leq M'\epsilon_{2}^{2}.
\end{equation}
Recall that, for all $\omega\in[-v_1,-v_0]\cup[v_0,v_1]$,
\begin{equation*}
I(\omega, \epsilon_1^{-1}) = \int_{-\frac{1}{\epsilon_1}}^{\frac{1}{\epsilon_1}}\hat{H}_{\epsilon_1,\epsilon_2}(\tau)\text{d}\tau
=\int_{-\frac{1}{\epsilon_1}}^{\frac{1}{\epsilon_1}}\frac{\epsilon_2\omega}{\sqrt{2\pi}}\hat{g}(\tau)\bigl(\sin(2\omega\tau)\sigma_{z}+\cos(2\omega\tau)\sigma_{y}\bigr)\text{d}\tau.
\end{equation*}
Hence by Lemma~\ref{lemma:integral_2} and Lemma~\ref{lemma:integral_3}, for all $n\in\mathbb{N}^{*}$, there exists $C'_n>0$ such that, for all $\epsilon_1,\epsilon_2>0$ and $\omega\in[v_0,v_1]$,
\[
    \|\int_{-\frac{1}{\epsilon_1}}^{\frac{1}{\epsilon_1}}\hat{H}_{\epsilon_1,\epsilon_2}(\omega,\tau)\text{d}\tau-\epsilon_2\omega g(2\omega)\sigma_{y}\| \leq |\omega|C'_{n}\epsilon_1^{n}.
\]
It follows that there exists $C_{n}''>0$ satisfying for all $ \epsilon_1,\epsilon_2>0$ and for all $\omega\in[-v_1,-v_0]\cup[v_0,v_1]$, we have
\begin{equation}
\label{eq:estimation_2}
    \|\exp\bigl(-iI(\omega,\epsilon_1^{-1})\bigr) - \exp(-i\epsilon_2\omega g(2\omega)\sigma_{y})\| \\
    \leq C''_{n}\epsilon_1^{n}.
\end{equation}
Therefore, we have
\[ 
\begin{aligned}
    \| \hat{X}_{\epsilon_1,\epsilon_2}(\omega, \epsilon_1^{-1})-\exp(-i\epsilon_2\omega g(2\omega)\sigma_{y}) \|
    \leq& \| \hat{X}_{\epsilon_1,\epsilon_2}(\omega, \epsilon_1^{-1}) - \exp\bigl(-iI(\omega, \epsilon_1^{-1})\bigr) \| \\
    &+ \| \exp\bigl(-iI(\omega, \epsilon_1^{-1})\bigr) - \exp\bigl(-i\epsilon_2 \omega 
 g(2\omega)\sigma_{y}\bigr) \|.
\end{aligned}
\]
Following \eqref{eq:estimation_1} and \eqref{eq:estimation_2}, we conclude the proof by taking $C_n=\max(C''_{n},M')$.
\end{proof}

The next lemma connects the auxiliary system in \eqref{eq:auxiliary-system} to the original two-level driftless system~\eqref{system:main} on $\mathrm{SU}(2)$.

\begin{lemma}
\label{lem:one_step}

For $\epsilon_1,\epsilon_2>0$, take $T=1/\epsilon_1$ and consider the following controls
\begin{align}
    u\equiv \nu,\quad v(t)=\frac{\epsilon_2}{\sqrt{2\pi}}\hat{g}(t-T)
\end{align}
defined on the time interval $[0,2T]$, with $\nu\in\{-1,1\}$ and $\hat{g}(\cdot)$ defined in \eqref{eq:def_hat_g}. Applying this control to the system in \eqref{system:main}, we obtain that for all $\omega\in[v_0,v_1]$,
\begin{equation*}
    X(\omega,2T)=\exp(-i\nu\omega T\sigma_{x})\hat{X}_{\epsilon_1,\epsilon_2}(\nu\omega,T)\exp(-i\nu\omega T \sigma_{x}),
\end{equation*}
where $\hat{X}_{\epsilon_1,\epsilon_2}(\nu\omega,T)$ is the solution to the auxiliary system in \eqref{eq:auxiliary-system} at $t=T$.
\end{lemma}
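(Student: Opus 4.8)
The plan is to work on the interval $[0,2T]$ where the controls are supported and to pass to the interaction frame attached to the constant part $u\equiv\nu$ of the Hamiltonian in \eqref{system:main}. Since $v(t)=\tfrac{\epsilon_2}{\sqrt{2\pi}}\hat g(t-T)$ is a time shift of $\hat g$ centered at the midpoint $t=T$, I would first recenter time by setting $s=t-T$, so that the control becomes $\tfrac{\epsilon_2}{\sqrt{2\pi}}\hat g(s)=v_{\epsilon_1,\epsilon_2}(s)$, which is exactly the control \eqref{eq:def_control} of the auxiliary system \eqref{eq:auxiliary-system}, and $s$ ranges over $[-T,T]=[-1/\epsilon_1,1/\epsilon_1]$, the time domain of that system.

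Next I would factor out the free evolution generated by the drift $\nu\omega\sigma_x$. Writing $X(\omega,t)=\exp(-i\nu\omega(t-T)\sigma_x)\Phi(\omega,t)$ cancels the $\nu\omega\sigma_x$ term, and $\Phi$ then obeys a driftless equation whose generator is $\omega v(t)$ times the rotated matrix $\exp(i\nu\omega(t-T)\sigma_x)\sigma_y\exp(-i\nu\omega(t-T)\sigma_x)$. The key computational input is the Pauli conjugation identity $\exp(i\theta\sigma_x)\sigma_y\exp(-i\theta\sigma_x)=\cos(2\theta)\sigma_y-\sin(2\theta)\sigma_z$, which produces precisely the frequency-doubled rotating terms in $\cos(2\nu\omega(t-T))\sigma_y$ and $\sin(2\nu\omega(t-T))\sigma_z$; this doubling is the structural reason the factor $2\omega$ appears in the auxiliary Hamiltonian $\hat H_{\epsilon_1,\epsilon_2}$.

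After recentering $s=t-T$ and a right-multiplication by the constant $\exp(i\nu\omega T\sigma_x)$ that normalizes the initial datum to $\mathbb{I}$ at $s=-T$, the interaction-frame propagator solves an equation of the form \eqref{eq:auxiliary-system} at parameter $\nu\omega$, and the plan is to identify it with $\hat X_{\epsilon_1,\epsilon_2}(\nu\omega,\cdot)$. The two leftover constant factors $\exp(-i\nu\omega T\sigma_x)$ — one from evaluating the frame change at $t=2T$, the other from the initial-condition normalization at $t=0$ — then assemble into the claimed sandwich $X(\omega,2T)=\exp(-i\nu\omega T\sigma_x)\hat X_{\epsilon_1,\epsilon_2}(\nu\omega,T)\exp(-i\nu\omega T\sigma_x)$, where the evenness of $\hat g$ (hence the symmetric placement of the pulse about $t=T$) is what makes the two boundary factors identical.

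The main obstacle I anticipate is the sign and parity bookkeeping needed to match the rotated generator to $\hat H_{\epsilon_1,\epsilon_2}(\nu\omega,\cdot)$. The conjugation identity pins down the relative sign of the $\sigma_z$ term, and one must check that, using the evenness of $\hat g$ together with $\cos(2\nu\omega s)=\cos(2\omega s)$ and $\sin(2\nu\omega s)=\nu\sin(2\omega s)$, both cases $\nu=\pm1$ reduce to the auxiliary dynamics at parameter $\nu\omega$; this is where the even/odd structure ($\cos$ even, $\sin$ odd) enters decisively. Verifying this alignment carefully, and confirming the two $\nu$ cases separately, is the crux — once it is in place, the remaining steps are routine applications of the change of variables for linear matrix ODEs, as in the proof of Lemma~\ref{lemma:one_step}.
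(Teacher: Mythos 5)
Your approach is exactly the paper's: the proof there consists of the single change of variables $\bar X(\omega,t)=\exp(i\nu\omega t\sigma_x)X(\omega,t+T)$, which is precisely your recentering-plus-interaction-frame transformation, followed by undoing it at $t=2T$; the symmetric placement of the pulse about $t=T$ plays the role you describe in producing the two identical boundary factors.

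However, the verification you defer as ``the crux'' is precisely where the identification does not literally close, and you should not take it on faith. Carrying out your own Pauli identity $e^{i\theta\sigma_x}\sigma_y e^{-i\theta\sigma_x}=\cos(2\theta)\sigma_y-\sin(2\theta)\sigma_z$ with $\theta=\nu\omega s$ gives the conjugated generator $\omega v_{\epsilon_1,\epsilon_2}(s)\bigl(\cos(2\nu\omega s)\sigma_y-\sin(2\nu\omega s)\sigma_z\bigr)$, whereas the auxiliary Hamiltonian evaluated at parameter $\nu\omega$ is $\hat H_{\epsilon_1,\epsilon_2}(\nu\omega,s)=\nu\omega v_{\epsilon_1,\epsilon_2}(s)\bigl(\sin(2\nu\omega s)\sigma_z+\cos(2\nu\omega s)\sigma_y\bigr)$: the $\sigma_z$ coefficients have opposite signs when $\nu=1$, and the $\sigma_y$ coefficients have opposite signs when $\nu=-1$. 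So your assertion that both cases ``reduce to the auxiliary dynamics at parameter $\nu\omega$'' fails as stated; the paper's own proof asserts the identification without computation and carries the same discrepancy. The mismatch is immaterial for Theorem~\ref{theorem:main}, because the $\sin(2\omega s)\sigma_z$ contribution integrates to zero by Lemma~\ref{lemma:integral_3} and the surviving $\cos(2\omega s)\sigma_y$ part of the true generator yields $\exp(-i\epsilon_2\omega g(2\omega)\sigma_y)$ for both values of $\nu$, which is what the composition argument actually needs. But to make your proof (and the lemma) literally correct you must either redefine $\hat H_{\epsilon_1,\epsilon_2}$ with the signs produced by the conjugation or restate the conclusion of the lemma accordingly; as written, the final sign check you postpone is not a formality that works out, but the step at which the statement itself needs amending.
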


\begin{proof}
    Fix $\epsilon_1,\epsilon_2>0$, let $T=1/\epsilon_1$, and consider the following change of variables:
    \begin{equation}
    \label{eq:change_of_variables_2}
        \bar{X}(\omega,t)=\exp(i\nu\omega t\sigma_{x})X(\omega,t+T).
    \end{equation}
    Under this transformation, the dynamics of $\bar{X}(\omega,\cdot)$ on the interval $[-T,T]$ satisfy
    \[
    \begin{aligned}
        i\dot{\bar{X}}\omega,t) = \hat{H}_{\epsilon_1,\epsilon_2}(\nu\omega,t)\bar{X}(\omega,t),\quad
        \bar{X}(\omega,-T) = \exp(-i\nu\omega T\sigma_x),
    \end{aligned}
    \]
    where $\hat{H}_{\epsilon_1,\epsilon_2}(\omega,t)$ denotes the time-dependent Hamiltonian defined in the auxiliary system~\eqref{eq:auxiliary-system}. It follows that
    \[
        \bar{X}(\omega,T)=\hat{X}_{\epsilon_1,\epsilon_2}(\nu\omega,T)\exp(-i\nu\omega T\sigma_{x}).
    \]
    We can then conclude the proof by applying the change of variables in \eqref{eq:change_of_variables_2} again.
\end{proof}

With the preceding lemmas in place, we are now ready to prove Theorem~\ref{theorem:main}.
\begin{proof}[{Proof of Theorem \ref{theorem:main}}]

Fix $\epsilon_1>0$, and $N, n\in\mathbb{N}^{*}$. Let us take $\epsilon_2=1/(2N)$ and apply Lemma~\ref{lem:one_step}. We then obtain that for all $\omega\in[v_0,v_1]$, the solution $X(\omega,\cdot)$ of system~\eqref{system:main} at $t=4(N+2)/\epsilon_1$ satisfies:
\[
    X\Bigl(\omega,\frac{4(N+2)}{\epsilon_1}\Bigr) = \Bigl(\hat{X}_{\epsilon_1,\epsilon_2}(-\omega, \epsilon_1^{-1})\hat{X}_{\epsilon_1,\epsilon_2}(\omega, \epsilon_1^{-1})\Bigr)^{N},
\]
where $\hat{X}_{\epsilon_1,\epsilon_2}(\omega,\cdot)$ is the solutions to \eqref{eq:auxiliary-system} with $\epsilon_2=1/(2N)$. By Lemma~\ref{lemma:one_step}, it follows that
\begin{equation*}
    \hat{X}_{\epsilon_1,\epsilon_2}(-\omega,\epsilon_1^{-1})\hat{X}_{\epsilon_1,\epsilon_2}(\omega,\epsilon_1^{-1})
    =\exp\Bigl(\frac{i\omega g(-2\omega)\sigma_{y}}{N}\Bigr) + \mathcal{O}(\epsilon_1^{n}N^{-1}+N^{-2}),
\end{equation*}
where $\mathcal{O}(\cdot)$ is uniform for all $\omega\in[v_0,v_1]$. Then by \eqref{eq:def_g}, $\omega g(2\omega)=-\omega g(-2\omega)=f(\omega)$ for all $\omega>0$, we have
\begin{equation*}
    \hat{X}_{\epsilon_1,\epsilon_2}(-\omega,\epsilon_1^{-1})\hat{X}_{\epsilon_1,\epsilon_2}(\omega,\epsilon_1^{-1}) \\
    =\exp\Bigl(-\frac{if(\omega)\sigma_{y}}{N}\Bigr)+\mathcal{O}(\epsilon_1^n N^{-1}+N^{-2}).
\end{equation*}

Finally, by taking the $N$-th power, we conclude that for all $\omega\in[v_0,v_1]$,
\[
\begin{aligned}
    X \Bigl(\omega, \frac{4N+2}{\epsilon_1}\Bigr)&= \bigl(\hat{X}_{\epsilon_1,\epsilon_2}(-\omega,\epsilon_1^{-1})\hat{X}_{\epsilon_1,\epsilon_2}(\omega,\epsilon_1^{-1})\bigr)^{N} \\
    &=\exp(-if(\omega)\sigma_{y})+\mathcal{O}(\epsilon_1^n+N^{-1}),
\end{aligned}
\]
where $\mathcal{O}(\cdot)$ is uniform for all possible $\omega$.
\end{proof}

\section{NUMERICAL RESULTS}
\label{sect:numerical}

In this section, we present numerical simulations to illustrate the effectiveness of the proposed control design strategy. The goal is to demonstrate that the constructed control inputs are capable of steering the entire quantum ensemble to the target distribution with high accuracy. These results serve to validate the theoretical guarantees established in the preceding section, and provide additional insight into the implementation of the method.

In this first example, we consider the ensemble control of system~\eqref{system:main} with $\omega\in[0.5,1]$. To define a smooth target distribution for the control design, we first introduce the function
\[
    p:\mathbb{R}\ni x\mapsto\left\{\begin{matrix}
        \exp\big(-\frac{1}{x}\big) & \text{if}\quad x>0,\\
        0  &\text{if}\quad x\leq 0
,    \end{matrix}\right.
\]
which is smooth on $\mathbb{R}$. Using this, we construct a smooth transition function
\[
    q:\mathbb{R}\ni x\mapsto \frac{p(x)}{p(x)+p(1-x)}
\]
satisfying $q(x)=0$ for $x\leq 0$ and $q(x)=1$ for $x\geq 1$. Given parameters $a<b<c<d$, this allows us to define the smooth bump function
\begin{equation}
\label{eq:bump}
    \Phi:\mathbb{R}\ni x\mapsto q\left(\frac{x-b}{a-b}\right)q\left(\frac{d-x}{d-c}\right),
\end{equation}
which satisfies $\Phi(x)=1$ for $x\in[b,c]$ and $\Phi(x)=0$ for $x\not\in [a,d]$. For this experiment, we choose $a=0.4,b=0.5,c=1$ and $d=1.1$, as illustrated in Figure~\ref{fig:bump}.

\begin{figure}[htbp]
    \includegraphics[width=0.45\linewidth]{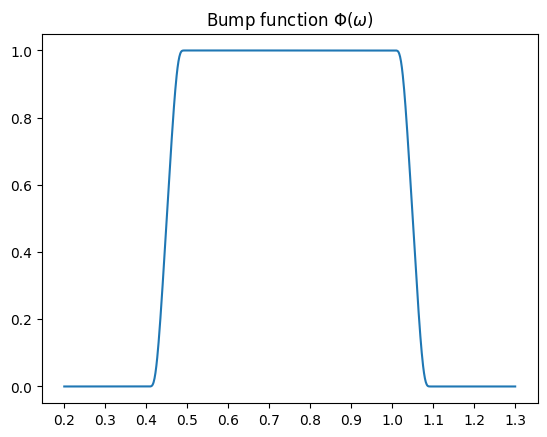}
    \centering
    \caption{Bump function in equation~\eqref{eq:bump} define with $a=0.4,b=0.5,c=1$ and $d=1.1$.}
    \label{fig:bump}
\end{figure}

The objective of this example is to steer all trajectories $X(\omega,\cdot)$ with $\omega\in[0.5,1]$ from the initial state $X(\omega,0)=\mathbb{I}$ to the common final state $\exp(-i\pi\sigma_{y}/2)$. To do so, instead of applying Theorem~\ref{theorem:main} directly to the discontinuous indicator function $\tfrac{\pi}{2}\cdot\mathbbm{1}_{[0.5,1]}$, we define a smooth approximation using the bump function: $f(\omega)=\tfrac{\pi}{2}\Phi(\omega)$, so that $f(\omega)=\tfrac{\pi}{2}$ for all $\omega\in[0.5,1]$ and $\operatorname{supp} f=[v_0,v_1]=[0.4,1.1]$. The corresponding smooth even functions $g$ and $\hat{g}$ are defined as in \eqref{eq:def_g} and \eqref{eq:def_hat_g}, yielding
\[
\begin{aligned}
    g(\omega) &= \frac{\pi}{\omega}\Phi\Bigl(\frac{\omega}{2}\Bigr), \\
    \hat{g}(t) &= \sqrt{2\pi}\int_{2v_0}^{2v_1}\Phi\Bigl(\frac{\omega}{2}\Bigr)\frac{\exp(-i\omega t)}{\omega}\text{d}\omega.
\end{aligned}
\]
Once the parameters $\epsilon_1>0$ and $N\in\mathbb{N}^{*}$ are fixed, the control inputs $u(\cdot)$ and $v(\cdot)$ are constructed according to Theorem~\ref{theorem:main}. To better visualize convergence, we recast the state of the system in a time-dependent frame defined by:
\[
    \hat{X}(\omega,t)=\exp\Bigl[i\Bigl(\int_{0}^{t}u(\tau)\text{d}\tau\Bigr)\omega\sigma_{x}\Bigr]X(\omega,t).
\]
We compare this with a reference trajectory
\[
    X\textsuperscript{ref}(\omega,t)=\exp\bigl(-ir(t)f(\omega)\sigma_{y}\bigr),
\]
where the piecewise constant function $r: [0,(4N+2)/\epsilon_1]\to [0,1]$ is defined as
\[
    r(t)=\frac{m}{2N}\quad\text{if}\quad t \in \Bigl(\frac{2m}{\epsilon_1},\frac{2m+1}{\epsilon_1}\Bigr].
\]
for a given $m\in\{0,\dots,2N\}$. Note that, by construction, $\int_{0}^{(4N+2)/\epsilon_1}v(\tau)\text{d}\tau=0$, which implies $\hat{X}(\omega, \tfrac{4N+2}{\epsilon_1}) = X(\omega, \tfrac{4N+2}{\epsilon_1})$. In the simulation, us choose $\epsilon_1=0.05$ and $N=10$. To evaluate performance, we consider
\[
\begin{aligned}
    P(\omega,t)&=|\langle \textbf{e}_2,\hat{X}(\omega,t)\textbf{e}_1\rangle|^{2},\quad\\P\textsuperscript{ref}(\omega,t)&=|\langle \textbf{e}_2,X\textsuperscript{ref}(\omega,t)\textbf{e}_1\rangle|^2,
\end{aligned}
\]
which represent the population on the second state $\textbf{e}_2$ of $\hat{X}(\omega,t)\textbf{e}_1$ and $X\textsuperscript{ref}(\omega,t)\textbf{e}_2$, respectively. These quantities are plotted for $\omega\in\{0.5,0.7,0.9\}$ in Figure~\ref{fig:example_1_1}. The final convergence of $\hat{X}(\omega,t)$ toward $\exp(-if(\omega)\sigma_{x})=\exp(-\pi\sigma_{y}/2)$ is illustrated in Figure~\ref{fig:example_1_2}.

\begin{figure}[thbp]
\label{fig:example_1}
    \centering
    \begin{subfigure}{0.4\textwidth}
\includegraphics[width=\textwidth]{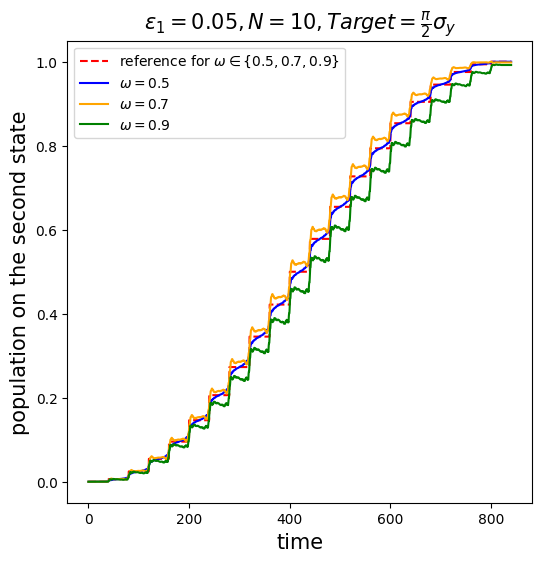}
    \caption{$P(\omega,t)$ and $P\textsuperscript{ref}(\omega,t)$ }
\label{fig:example_1_1}
    \end{subfigure}
    \hfill
    \begin{subfigure}{0.4\textwidth}
    \includegraphics[width=\textwidth]{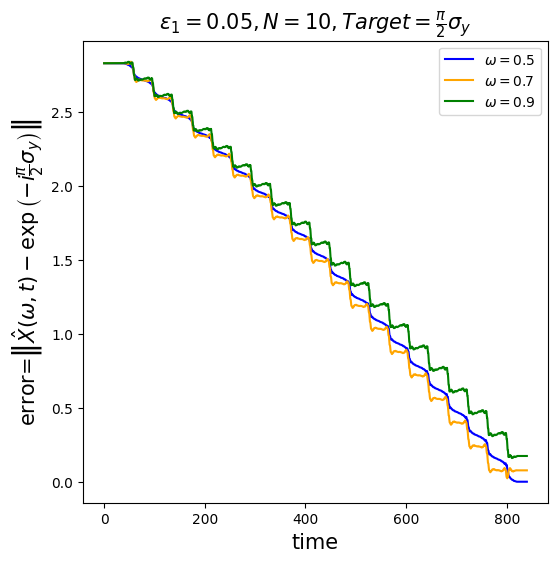}
    \caption{Convergence to target states}
\label{fig:example_1_2}
\end{subfigure}
\caption{Simulations for $\omega\in\{0.5,0.7,0.9\}$, $\epsilon_1=0.05,N=10$ and $f(\omega)=\frac{\pi}{2}\Phi(\omega)$: The three states are steered simultaneously to the parameterized final states $\exp\left(-i\frac{\pi}{2}\sigma_{y}\right)$with the same control $(u(\cdot),v(\cdot))$. Notice that in Figure~\ref{fig:example_1_1}, $P\textsuperscript{ref}(\omega,t)$ is the same for all $\omega$.}
\end{figure}

In the second numerical example, the objective is to steer the ensemble $X(\omega,t)$, for $\omega \in [0.5, 1]$, from the common initial state $X(\omega,0)=\mathbb{I}$ to the parameterized final states $\exp(-i\tfrac{\pi}{6\omega}\sigma_{x})$. To this end, we apply Theorem~\ref{theorem:main} with the function $f(\omega)=\tfrac{\pi}{6\omega}\Phi(\omega)$. Let $\tilde{u}(\cdot)$ and $\tilde{v}(\cdot)$ denote the control inputs generated by this choice of $f(\omega)$ according to the construction in Theorem~\ref{theorem:main}.

By applying the pair $(\tilde{u}(\cdot),\tilde{v}(\cdot))$, the system is driven to the final state $\exp(-i\frac{\pi}{6\omega}\sigma_{y})$, rather than the desired $\exp(-i\frac{\pi}{6\omega}\sigma_{x})$, due to the orientation of the control inputs. To correct this, we interchange the roles of the controls by setting $u(\cdot)=\tilde{{v}}(\cdot)$ and $v(\cdot)=\tilde{u}(\cdot)$. With this modification, the system is steered to the correct target states $\exp(-i\frac{\pi}{6\omega}\sigma_{x})$ for all $\omega \in [0.5, 1]$.

Similar to the convergence analysis in the previous example, we recast the state of the system in a time-dependent frame:
\[
    \hat{X}(\omega,t)=\exp\Bigl[i\Bigl(\int_{0}^{t}v(\tau)\text{d}\tau\Bigr)\omega\sigma_{y}\Bigr]X(\omega,t).
\]
The populations $P(\omega,t)$ and $P\textsuperscript{ref}(\omega,t)$ are defined analogously to the first example. For this simulation, we choose $\epsilon_1=0.025$ and $N=5$. The functions $P(\omega,t)$ and $P\textsuperscript{ref}(\omega,t)$ are compared in Figure~\ref{fig:example_2_1} for $\omega\in\{0.5,0.7,0.9\}$, and the convergence toward $\exp\left(-i\frac{\pi}{6}\sigma_x\right)$ is illustrated in Figure~\ref{fig:example_2_2}.

\begin{figure}[thp]
\label{fig:example_2}
    \centering
    \begin{subfigure}{0.4\textwidth}
    \includegraphics[width=\textwidth]{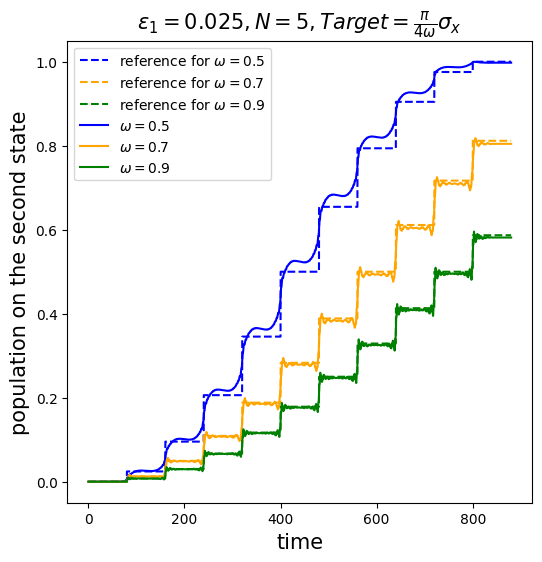}
    \caption{$P(\omega,t)$ and $P\textsuperscript{ref}(\omega,t)$ }
\label{fig:example_2_1}
    \end{subfigure}
    \hfill
    \begin{subfigure}{0.4\textwidth}
    \includegraphics[width=\textwidth]{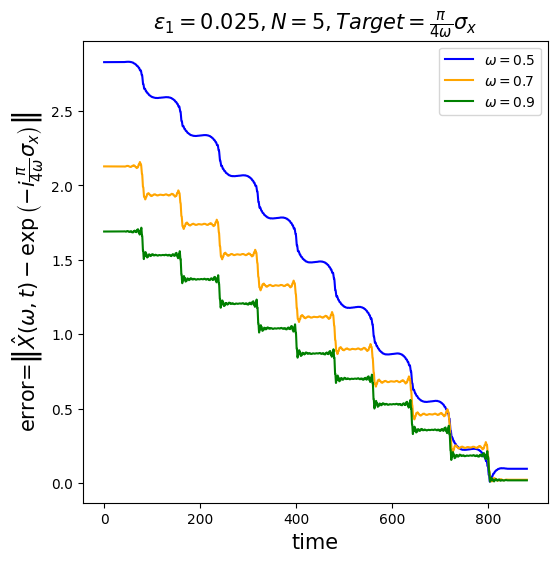}
    \caption{Convergence to target states}
\label{fig:example_2_2}
\end{subfigure}
\caption{Simulations for $\omega\in\{0.5,0.7,0.9\}$, $\epsilon_1=0.025,N=5$ and $f(\omega)=\frac{\pi}{4\omega}\Phi(\omega)$: The three states are steered simultaneously to the parameterized final states $\exp\left(-i\frac{\pi}{4\omega}\sigma_{x}\right)$with the same control $(u(\cdot),v(\cdot))$.}
\end{figure}

\section{CONCLUSION}

In this work, an explicit control design strategy was proposed for the ensemble control of a one-parameter family of driftless two-level quantum systems. Using Fourier transform techniques, it was shown that the proposed strategy can approximate any continuous ensemble on $\mathrm{SU}(2)$. The effectiveness of the method was illustrated through two numerical examples, which show that the ensemble system can be steered to closely approximate the desired target distribution within a small error. While previous studies have primarily focused on the theoretical aspects of ensemble controllability, relatively little attention has been given to the explicit construction of control signals. The present work contributes toward closing this gap by providing a constructive and implementable strategy. Future work will focus on ensemble control strategies for quantum systems with drift, $n$-level systems, and systems dependent on multiple parameters. Additional research will also explore quantum speed limits and optimal control design problems in the context of ensemble control.



\bibliography{reference}

\end{document}